\documentclass[12pt]{amsart}
\usepackage{amssymb}
\usepackage{amsthm}
\usepackage{amsmath}
\usepackage{mdwlist}
\usepackage{mathrsfs}
\usepackage{graphicx}
\usepackage{a4wide}
\usepackage{thmtools}
\usepackage{mathtools,dsfont}
\usepackage{multicol}

\newtheorem{theorem}{Theorem}

\newtheorem{proposition}[theorem]{Proposition}
\newtheorem{corollary}[theorem]{Corollary}

\theoremstyle{definition}

\newtheorem{example}[theorem]{Example}
\declaretheorem[numbered=no]{Theorem}
\declaretheorem[numbered=no]{Theorem A}
\declaretheorem[numbered=no]{Theorem B}
\theoremstyle{remark}
\newtheorem{remark}[theorem]{Remark}


\newcommand{\R}{\mathbb{R}}

\newcommand{\N}{\mathbb{N}}

\newcommand{\p}{\varphi}
\newcommand{\e}{\varepsilon}

\newcommand{\oo}{\overline}

\newcommand{\ind}{\mathds{1}}
\newcommand{\n}[1]{\|#1\|}

\newcommand{\abs}[1]{\vert #1\vert}
\renewcommand{\leq}{\leqslant}
\renewcommand{\geq}{\geqslant}

\newcommand{\essinf}{\mathrm{ess\,inf}}
\newcommand{\esssup}{\mathrm{ess\,sup}}

\newcommand{\vertiii}[1]{{\left\vert\kern-0.25ex\left\vert\kern-0.25ex\left\vert #1 
    \right\vert\kern-0.25ex\right\vert\kern-0.25ex\right\vert}}

\newcounter{smallromans}

\newenvironment{romanenumerate}
{\begin{list}{{\normalfont\textrm{(\roman{smallromans})}}}%
  {\usecounter{smallromans}\setlength{\itemindent}{0cm}%
   \setlength{\leftmargin}{5.5ex}\setlength{\labelwidth}{5.5ex}%
   \setlength{\topsep}{.5ex}\setlength{\partopsep}{.5ex}%
   \setlength{\itemsep}{0.1ex}}}%
{\end{list}}

\newcounter{smallromansdash}

{\end{list}}

\newcounter{bigromans} 
  {\end{list}}

\begin{document}
\title[Steinhaus' lattice-point problem]{Steinhaus' lattice-point problem\\ for Banach spaces}

\author[T.~Kania]{Tomasz Kania}
\address{School of Mathematical Sciences, Western Gateway Building, University College Cork, Cork, Ireland \,\,{\rm and}\,\, Mathematics Institute, University of Warwick, Gibbet Hill Rd, Coventry, CV4 7AL, England}
\email{tomasz.marcin.kania@gmail.com, t.kania@warwick.ac.uk}

\author[T. Kochanek]{Tomasz Kochanek}
\address{Institute of Mathematics, Polish Academy of Sciences, \'Sniadeckich 8, 00-656 Warsaw, Poland\,\,{\rm and}\,\, Institute of Mathematics, University of Warsaw, Banacha~2, 02-097 Warsaw, Poland}
\email{tkoch@impan.pl}

\subjclass[2010]{Primary 46B04, 46B20}
\keywords{Steinhaus' problem, lattice points, strictly convex space.}
\thanks{The first-named author acknowledges with thanks funding received from the European Research Council / ERC
Grant Agreement No. 291497. The research of the second-named author was supported by the Polish Ministry of Science and Higher Education in the years 2013--14, under Project No.~IP2012011072.}
\begin{abstract}
Steinhaus proved that given a~positive integer $n$, one may find a circle surrounding exactly $n$ points of the integer lattice. This statement has been recently extended to Hilbert spaces by Zwole\'{n}ski, who replaced the integer lattice by any infinite set that intersects every ball in at most finitely many points. We investigate Banach spaces satisfying this property, which we call (S), and characterise them by means of a~new geometric property of the unit sphere which allows us to show, {\it e.g.}, that all strictly convex norms have (S), nonetheless, there are plenty of non-strictly convex norms satisfying (S). We also study the corresponding renorming problem.
\end{abstract}

\maketitle
\section{Introduction and statement of the main results}
\noindent
The following feature of the integer lattice in the Euclidean plane was probably first observed by Steinhaus \cite[Problem 24 on p.~17]{steinhaus}: for any natural number $n$ one may find a~circle surrounding exactly $n$ lattice points. Zwole\'nski \cite{zwolenski} generalised this fact to the setting of Hilbert spaces in the following manner. He replaced the set of lattice points by a more general {\it quasi-finite} set, \emph{i.e.},~an~infinite subset $A$ of a metric space $X$ such that each ball in $X$ contains only finitely many elements of $A$. His result then reads as follows.
\begin{Theorem}[\cite{zwolenski}]\label{T1}
Let $A$ be a quasi-finite subset of a Hilbert space $X$. Then there exists a dense subset $Y\subset X$ such that for every $y\in Y$ and $n\in\N$ there exists a ball $B$ centred at $y$ with $\abs{A\cap B}=n$.
\end{Theorem}
Let us then distill the property that we will term {\it Steinhaus' property} (S). A metric space $X$ has this property if, by definition,
\begin{itemize*}
\item[(S)] for any quasi-finite set $A\subset X$ there exists a dense set $Y\subset X$ such that for all $y\in Y$ and $n\in\N$ there exists a ball $B$ centred at $y$ with $\abs{A\cap B}=n$.
\end{itemize*}
We translate condition (S), formulated above, into three equivalent statements concerning the geometry of the unit ball of a Banach space. Roughly speaking, they require that, locally, the unit sphere of $X$ does not look the same at any two distinct points. This approach will be particularly beneficial, as it will allow us to identify spaces that share that property with Hilbert spaces, yet of a very different nature. Our first main result then reads as follows. 

\begin{Theorem A}\label{thma}
Let $X$ be a Banach space. The following assertions are equivalent:
\begin{itemize*}
\item[(S)] $X$ has Steinhaus' property;
\item[(S$_1$)] for any quasi-finite set $A\subset X$ there exists a dense set $Y\subset X$ such that for every $y\in Y$ there exists a ball $B$ centred at $y$ with $\abs{A\cap B}=1$;
\item[(S')] for all $x,y\in X$ with $x\not=y$, $\n{x}=\n{y}=1$ and each $\delta>0$ there exists a $z\in X$ with $\n{z}<\delta$ such that one of the vectors $x+z$ and $y+z$ has norm greater than $1$, whereas the other has norm smaller than $1$;
\item[(S'')] for all $x,y\in X$ with $x\not=y$, $\n{x}=\n{y}=1$ and each $\delta>0$ there exists a $z\in X$ with $\n{z}<\delta$ such that $\n{x+z}\not=\n{y+z}$.
\end{itemize*}
\end{Theorem A}
In other words, condition (S'') means exactly that one cannot find a ‘neighbourhood’ of parallel line segments on the unit sphere of equal length. This seems to be a~new geometric property which, as we will see, is essentially weaker than strict convexity. Notice that, in contrast to many other classical properties, property (S) is not inherited by subspaces and, in a sense, is neither local nor global. \smallskip

Properties (S') and (S'') are related to another (weaker) property of `non-flatness' of the unit sphere:
\begin{itemize*}
\item[(F)] the unit sphere $S_X$ of $X$ does not contain any flat faces, that is to say, there is no non-empty subset of $S_X$, open in the relative norm topology, that is contained in a~hyperplane.
\end{itemize*}
Here by a~{\it hyperplane} of $X$ we understand a~translation of a~subspace of $X$ of codimension~$1$, {\it i.e.}, a~set of the form $x+\mathrm{ker}(x^\ast)$ for some $x\in X$ and $x^\ast\in X^\ast$. Note, however, that (F) does not imply (S'') that is witnessed by the norm $\n{(x,y,z)}=\max\{\sqrt{x^2+y^2},\abs{z}\}$ for $(x,y,z)\in\R^3$ (consider the points $(1,0,0)$ and $(1,0,\frac12)$). However, whether every Banach space admits a renorming satisfying (F) seems to be an~attractive open problem.
\smallskip

We employ the announced equivalence to extend Zwole\'nski's result to strictly convex Banach spaces (Corollary~\ref{C1}). It is well-known that not every Banach space admits a strictly convex renorming, just to mention the examples of $\ell_\infty(\Gamma)$ for any uncountable set $\Gamma$ (see \cite{scday} and \cite[\S 4.5]{diestel}) or the quotient space $\ell_\infty / c_0$ (\cite{bourgain}). This motivates the question of whether strict convexity and property (S) are equivalent at the level of renormings, and a~negative answer is a~part of our next result.

\begin{Theorem B}\label{T2} 
Assuming that the continuum is a real-valued measurable cardinal, there exists a~non-strictly convexifable Banach space whose norm satisfies (S). Moreover, for any Banach space $X$ we have:
\begin{romanenumerate}
\item if $\dim X\leqslant 2$, then $X$ has property (S) if and only if $X$ is strictly convex;

\vspace*{1mm}
\item if $\dim X>2$ and $X$ admits a~renorming with property (S), then it also admits a~non-strictly convex renorming with property (S).
\end{romanenumerate}
\end{Theorem B}
Solovay (\cite{solovay}) proved that the assertion that the continuum is a real-valued cardinal is equiconsistent with the existence of a two-valued measurable cardinal number, therefore its consistency cannot be proved in ZFC alone (assuming of course that ZFC itself is consistent). Interestingly, our construction in this universe is possible because the real-valued measurability of the continuum implies the failure of the Continuum Hypothesis (\cite[p.~131]{banachkuratowski}) and we take advantage not only of pleasant measure-theoretic properties of the continuum but also of the existence of an uncountable cardinal number below it.\smallskip

It seems unlikely that real-measurability of the continuum is really necessary to show that there exist Banach spaces with (S) but which do not have a strictly convex renorming. This leaves the question of possibility of such constructions in ZFC open.

\section{Proof of Theorem A}

\begin{proof}[Proof of Theorem A]
Since the implications (S) $\Rightarrow$ (S$_1$) and (S') $\Rightarrow$ (S'') hold true trivially, it is enough to prove that (S$_1$) $\Rightarrow$ (S'), (S') $\Rightarrow$ (S) and (S'') $\Rightarrow$ (S').

\medskip\noindent
{\bf (S$\boldsymbol{_1}$) $\boldsymbol{\Rightarrow}$ (S'): }Suppose that (S$_1$) holds. Fix any $\delta>0$ and $x,y\in X$ with $x\not=y$, $\n{x}=\n{y}=1$. Consider any quasi-finite set $A\subset X$ such that $A\cap (1+\delta)B_X=\{ x,y\}$, where $B_X$ stands for the closed unit ball of $X$. According to (${\rm S}_1$), there is a $u\in X$, $\n{u}<\delta/2$, such that for some $r>0$ the open ball $B(u,r)$ contains exactly one element of $A$. Suppose there is an $a\in A\setminus\{ x,y\}$ belonging to $B(u,r)$. Then $$r>\n{a-u}\geq\n{a}-\n{u}>(1+\delta)-\frac{\delta}{2}=1+\frac{\delta}{2}\, ,$$hence $\n{x-u}<r$, that is $x\in B(u,r)$; a contradiction. Consequently, $B(u,r)$ contains exactly one of the points $x$ and $y$, say $x\in B(u,r)$ and $y\not\in B(u,r)$. Then $$1-\frac{\delta}{2}<\n{x-u}<r\leq\n{y-u}<1+\frac{\delta}{2}\, .$$Suppose that $r\leq 1$, $r=1-\e$ with some $\e\in [0,\delta/2)$ and take any number $\rho$ satisfying $$0<\rho<\min\Bigl\{r-\n{x-u},\frac{\delta}{2}-\e\Bigr\}.$$ Obviously, we may find $v\in X$ with $\n{v}\leq\e+\rho$ such that $\n{y-(u+v)}\geq r+\e+\rho>1$. Then we also have $$\n{x-(u+v)}\leq\n{x-u}+\n{v}<r-\rho+\n{v}\leq 1.$$Therefore, setting $z=-(u+v)$ completes the proof of our claim, since we have the estimate $\n{u+v}<\e+\rho+\delta/2<\delta$. We proceed similarly in the case where $r>1$ so the proof of (S$_1$) $\Rightarrow$ (S') is then complete.\medskip

\noindent
{\bf (S') $\boldsymbol{\Rightarrow}$ (S): }Let $X$ be a Banach space $X$ that satisfies (S') and let $A\subset X$ be a quasi-finite set. For any $n\in\N$ set $$G_n=\bigl\{ x\in X\colon \abs{A\cap B(x,r)}=n\mbox{ for some }r>0\bigr\} .$$It is evident, in view of the definition of a quasi-finite set, that each $G_n$ is an open subset of $X$. We shall prove that it is also dense.\smallskip

Assume, in search of a contradiction, that there is an open ball $U=B(x_0, r_0)$ in $X$ not intersecting $G_n$. Rescaling $U$ if necessary, we may suppose that $A\cap U=\varnothing$. With any point $x\in U$ we associate two integers $m(x)<n$ and $k(x)\geq 2$ defined as follows: Since $x\not\in G_n$, there is the largest non-negative integer $m(x)<n$ for which there exists $q>0$ with $\abs{A\cap B(x,q)}=m(x)$. Then, for every $s>q$ we have either $\abs{A\cap B(x,s)}=m(x)$ or $\abs{A\cap B(x,s)}>n$. Define
$$
s=\inf\bigl\{ t>0\colon\abs{A\cap B(x,t)}>n\bigr\} .
$$
Then exactly $m(x)$ points $a_1,\ldots ,a_{m(x)}\in A$ lie in the ball $B(x,sr_0)$, whereas at least two such points lie on the boundary of $B(x,s)$; let us call them $b_1,\ldots ,b_k$, where $k\geq 2$. In this way we define $k(x)=k$.\smallskip

Now, we shall use an~infinite descent argument to obtain a~desired contradiction. Let $a_1,\ldots,a_m$, $b_1,\ldots,b_k$ be as above for $x=x_0$, where $m=m(x_0)$ and $k=k(x_0)$. Pick any $\delta>0$ such that 
\begin{equation*}
\begin{split}
\{ a_i\colon 1\leqslant i\leqslant m\}\subseteq B(x_0+u,s)\cap A \subseteq \{a_i,b_j &\colon 1\leqslant i\leqslant m, 1\leqslant j\leqslant k\}\\
&\mbox{for every }u\in X\mbox{ with }\n{u}<\delta.
\end{split}
\end{equation*}
Define $\rho=\max\{\n{a_i-x_0}\colon 1\leqslant i\leqslant m\}<s$ and set $\gamma=s-\rho$. Each of the vectors $(b_j-x_0)/s$ ($j=1,\ldots,k$) lies in the unit sphere. Applying the hypothesis (S') to any two of them (\emph{e.g.}, to $j=1,2$), we obtain a~point $z\in X$ with
$$
\n{sz}<\min\{\delta, \gamma/2\}
$$
such that one of the vectors: $b_j-x_0-sz$ ($j=1,2$) has norm greater than $s$, whereas the other has norm smaller than $s$. By decreasing $\delta$, if necessary, we may also assume that the point $x:=x_0+sz$ still lies in $U$. Therefore the ball $B(x,s)$ with the centre in $U$ contains all $a_i$'s ($1\leqslant i\leqslant m$) and at least one but not all among $b_j$'s ($1\leqslant j\leqslant k$). Observe also that by our choice of $z$, we have
$$
\n{a_i-x}\leqslant\n{a_i-x_0}+\n{sz}<\rho+\gamma/2=s-\gamma/2\quad\mbox{for each }1\leqslant i\leqslant m
$$
and
$$
\n{b_j-x}\geqslant\n{b_j-x_0}-\n{sz}>s-\gamma/2\quad\mbox{for each }1\leqslant j\leqslant k.
$$
Therefore, by suitably rescaling the ball $B(x,s)$, we obtain a~new ball centred at $x$ which contains all of $a_i$'s and whose boundary contains some but not all of $b_j$'s. This shows that we have either $m(x)>m(x_0)$ or $k(x)<k(x_0)$. This construction (with $x_0$ replaced by $x$) will ultimately lead to a~contradiction, as we finally arrive at a~point $u\in U$ with $m(u)\geqslant n$ or $k(u)<2$. Therefore, all the sets $G_n$ ($n\in\N$) are open and dense.\smallskip

By the Baire Category Theorem, the set $Y=\bigcap_{n=1}^\infty G_n$ is dense in $X$ and, obviously, for each $y\in Y$ and $n\in\N$ there is a ball $B$ centred at $y$ with $\abs{A\cap B}=n$. This completes the proof of (S).

\smallskip\noindent
{\bf (S'') $\boldsymbol{\Rightarrow}$ (S'):  }Assume the negation of (S') and choose distinct unit vectors $x,y\in X$ and $\delta>0$ so that there is no vector $z\in X$ with $\n{z}<\delta$ for which exactly one of the vectors $x+z$ and $y+z$ lies inside the unit ball of $X$. For every $u\in S_X$ define
$$
V_u=\bigl\{z\in S_X\colon \n{u+\alpha z}<1\mbox{ for some }\alpha>0\bigr\}
$$
and
$$
\lambda_u(z)=\min\bigl\{\delta,\,\inf\{\alpha>0\colon\n{u+\alpha z}\geq 1\}\bigr\}\quad (z\in V_u).
$$
By the assumption, we have $V_x=V_y$ and $\lambda_x(z)=\lambda_y(z)$ for every $z\in V_x$, which means that the unit sphere looks locally the same at $x$ and $y$ (via the translation by $y-x$), namely,
\begin{equation}\label{SS1}
y-x+(B(x,\delta)\cap S_X)=B(y,\delta)\cap S_X.
\end{equation}
Pick $\eta>0$ so small that
\begin{equation}\label{SS2}
\Biggl\|x-\frac{x+z}{\n{x+z}}\Biggr\|<\delta\quad\mbox{ and }\quad\Biggl\|y-\frac{y+z}{\n{y+z}}\Biggr\|<\delta\quad\mbox{ if }\,\n{z}<\eta.
\end{equation}

Now, using (S''), choose a~vector $z\in X$ with $\n{z}<\eta$ so that $\n{x+z}\not=\n{y+z}$. We have then two possibilities: either $\n{x+z}\leq 1$ and $\n{y+z}\leq 1$, or $\n{x+z}\geq 1$ and $\n{y+z}\geq 1$. We shall consider the former case; for the latter one the argument is similar.

With no loss of generality we can assume that $\n{x+z}>\n{y+z}$. Consider the function $g\colon [0,\infty)\to [0,\infty)$ given by $$
g(\alpha)=\n{x+z+\alpha(y-x)}$$
which is convex, as can be easily verified. In view of \eqref{SS1} and \eqref{SS2}, we have
$$
\Biggl\|y-x+\frac{x+z}{\n{x+z}}\Biggr\|=1,
$$
that is, $g(\n{x+z})=\n{x+z}$. We have also $g(0)=\n{x+z}$ and $g(1)=\n{y+z}<\n{x+z}$. This is a~contradiction with the convexity of $g$, as the arguments: $0$, $\n{x+z}$ and $1$ lie in this order on the real line.
\end{proof}
\section{Examples}
\noindent
In this section we will demonstrate some applications of Theorem~A in concrete situations. We begin with a~strengthening of Zwole\'nski's result. \smallskip

Given two elements $x,y$ in a real vector space $X$, we denote by $\overline{xy}$ the line segment between $x$ and $y$, \emph{i.e.}, $\overline{xy} = \{\lambda x + (1-\lambda)y\colon \lambda\in [0,1]\}$.
\begin{proposition}Let $X$ be a Banach space and suppose that $x,y\in X$ are distinct unit vectors. If $\overline{xy}\not\subseteq S_X$, then for each $\delta > 0$ there is $z\in X$ with $\|z\|<\delta$ such that one of the vector $x+z, y+z$ has norm greater than 1 whereas the other one has norm strictly less than 1.\end{proposition}
\begin{proof}
Let $\delta>0$ and $x,y\in X$ with $x\not=y$, $\n{x}=\n{y}=1$ be given. Then each point inside the segment $\oo{xy}$, joining $x$ and $y$, has norm smaller than $1$, whereas each point lying on the straight line passing through $x$ and $y$, but outside $\oo{xy}$, has norm larger than $1$. Therefore, any point $z\in X$ satisfying $0<\n{z}<\delta$ and $x+z\in\oo{xy}$ does the job. 
\end{proof}
\begin{corollary}\label{C1}
Every strictly convex Banach space $X$ satisfies {\rm (S)}.
\end{corollary}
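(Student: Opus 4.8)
The plan is to invoke Theorem~\ref{T2} and simply verify the geometric condition (S') directly, since (S') is far easier to check from the shape of the unit ball than (S) itself. So I fix distinct unit vectors $x,y\in X$ and a tolerance $\de>0$, and I look for a small perturbation $z$ that pushes $x$ strictly outside the unit ball while pulling $y$ strictly inside.

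The engine of the argument will be a single functional. By the Hahn--Banach theorem I choose $f\in X^*$ with $\n{f}=1$ and $f(x)=1$. The crucial point---and the only place strict convexity enters---is that $f$ cannot norm any unit vector other than $x$; in particular $f(y)<1$. Indeed, were $f(y)=1$ as well, then $f\bigl(\tfrac12(x+y)\bigr)=1\leq\n{\tfrac12(x+y)}\leq 1$ would force $\n{\tfrac12(x+y)}=1$ with $x\neq y$, contradicting strict convexity. This observation is what I expect to be the heart of the matter; everything after it is a routine continuity and convexity estimate.

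With $f(y)<1$ in hand, I use that $\sup\{f(c)\colon\n{c}<1\}=\n f=1$ to pick $c\in X$ with $\n{c}<1$ and $f(c)>f(y)$, and then set $z=t(c-y)$ for a small parameter $t>0$. On one hand $y+z=(1-t)y+tc$ is a convex combination of a unit vector and an interior point, so
\[
\n{y+z}\leq (1-t)\n{y}+t\n{c}<(1-t)+t=1 .
\]
On the other hand $f(x+z)=1+t\bigl(f(c)-f(y)\bigr)>1$, whence $\n{x+z}\geq f(x+z)>1$. Finally $\n z=t\,\n{c-y}$ can be made smaller than $\de$ by shrinking $t$. Thus $z$ witnesses (S') for the pair $x,y$, and Theorem~\ref{T2} then yields (S).

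The main obstacle, as indicated, is the separation statement $f(y)<1$: it is precisely the assertion that a supporting functional of a strictly convex ball touches the sphere in at most one point, and it is what distinguishes strictly convex spaces from general ones in this problem. Once that is isolated, no genuine difficulty remains; I would only need to check that the two strict inequalities survive simultaneously as $t\to 0^{+}$, which they do since each is strict and depends continuously on $t$.
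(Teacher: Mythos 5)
Your proof is correct, and it reduces to (S') via Theorem~\ref{T2} just as the paper does; the difference lies in how the perturbation $z$ is produced, and there your route is genuinely different. The paper's verification is purely primal and essentially one line: by strict convexity every point strictly inside the chord $\oo{xy}$ has norm $<1$, while every point of the line through $x$ and $y$ lying outside that segment has norm $>1$; hence any $z$ with $0<\n{z}<\de$ and $x+z\in\oo{xy}$ works, since then $y+z$ lies on the line beyond $y$. You instead pass to the dual: a norming functional $f$ at $x$ must satisfy $f(y)<1$ (supporting hyperplanes of a strictly convex ball touch the sphere at exactly one point), and then $z=t(c-y)$, with $c$ an interior point satisfying $f(c)>f(y)$, pushes $x$ out via $\n{x+z}\geq f(x+z)=1+t(f(c)-f(y))>1$ and pulls $y$ in via the convex-combination estimate $\n{y+z}\leq(1-t)+t\n{c}<1$, with $\n{z}=t\n{c-y}<\de$ for small $t$; all of these estimates check out. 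The paper's chord argument buys brevity and avoids Hahn--Banach entirely; your argument isolates the supporting-functional characterization of strict convexity, at the cost of invoking duality for what can be done by elementary convexity. Note that both arguments in fact give the stronger conclusion that one may prescribe which of the two vectors exits the ball (simply exchange the roles of $x$ and $y$), a strengthening of (S') which, by the remark following Example~\ref{Ex}, fails in general spaces satisfying (S').
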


Now, we will see that strictly convex spaces do not exhaust the whole class of Banach spaces satisfying Steinhaus' condition. In fact, these two classes differ already in dimension three. The following construction will also serve as a~base for the proof of Theorem~B.
\begin{example}\label{Ex}
We {\it claim} that there exists a~norm $\vertiii{\,\cdot\,}$ in $\R^3$ such that $(\R^3,\vertiii{\,\cdot\,})$ contains $\ell_\infty^2$ isometrically (and hence is not strictly convex), nonetheless it satisfies condition (S). We are indebted to the referee for suggesting the following example which significantly simplified our original construction.

First, observe that the negation of (S'') easily implies that there are two different points $x$ and $y$ on the~unit sphere and $\delta>0$ so that $\n{x+z}=\n{w+z}$ whenever $\n{z}<\delta$ and $w\in\overline{xy}$. In other words, if a~given Banach space fails Steinhaus' condition, then there must be a~`neighbourhood' of segments on the unit sphere. Having this in mind we set
$$
B=\{(x_1,x_2,x_3)\in [-1,1]^3\colon\abs{x_3}\leqslant f(x_1,x_2)\},
$$
where $f\colon [-1,1]^2\to [0,1]$ is any continuous function satisfying the equations $f(0,0)=1$ and $f(-x_1,-x_2)=f(x_1,x_2)$ which vanishes on the~boundary of $[-1,1]^2$ and is strictly concave on $(-1,1)^2$. For example, we can take $f(x_1,x_2)=(1-\abs{x_1})^p(1-\abs{x_2})^p$ with $0<p<\frac{1}{2}$. Then, let $\vertiii{\,\cdot\,}$ be the~norm on $\R^3$ defined as the~Minkowski functional of $B$. Since there are only four segments lying on the unit sphere (the edges of the~square $[-1,1]^2\times\{0\}$), the Banach space $(\R^3,\vertiii{\,\cdot\,})$ satisfies Steinhaus' condition due to the remark above.

It is worth noticing a~simple geometrical feature of $B$ which makes $\vertiii{\,\cdot\,}$ satisfy condition (S'). Namely, considering any two different points $\mathrm{x}=(t,1,0)$ and $\mathrm{y}=(u,1,0)$ with $0\leqslant t<u<1$ we see that the curve lying on $B$ that starts at $\mathrm{x}$ and is parallel to the~$x_2x_3$-plane is flatter at the~point $\mathrm{x}$ than its counterpart at the~point $\mathrm{y}$. Therefore, for a~given $\delta>0$, one can take a~vector $\mathrm{z}\in\R^3$ with $\n{\mathrm{z}}<\delta$ of the form $\mathrm{z}=(0,v,w)$ to guarantee that exactly one (more precisely: the~latter one) of the~vectors $\mathrm{x}+\mathrm{z}$, $\mathrm{y}+\mathrm{z}$ goes outside of $B$. For any other two points our claim is either trivial or analogous. The~upper part of the~ball $B$ defined as above with $p=\frac{1}{3}$, as well as some contour lines illustrating the~above-mentioned flattening effect, are depicted in the two figures below.

\begin{multicols}{2}
\includegraphics[scale=0.8]{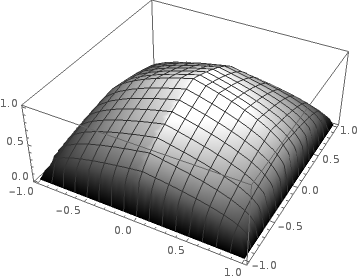}\vfill
\columnbreak
\hspace*{4mm}\includegraphics[scale=0.6]{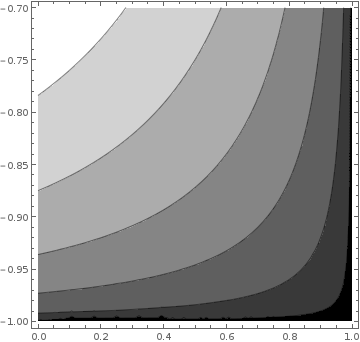}\vfill
\end{multicols}
\end{example}

\begin{remark}
The above example shows that there is a Banach space $X$ satisfying (S'), but containing a pair of distinct vectors $x,y\in X$, with $\n{x}=\n{y}=1$, such that for some $\delta>0$ it is impossible to increase $\n{x}$ and decrease $\n{y}$ by adding to $x$ and $y$ the same vector $z\in X$ with $\n{z}<\delta$. In other words, condition (S') cannot be strengthened by claiming which one of $x+z$ and $y+z$ has norm greater than $1$.
\end{remark}
\begin{remark}\label{remark_sc}
Of course, if $X$ is a non-strictly convex Banach space with $\dim X=2$, then condition (S') fails to hold. Therefore, Steinhaus' condition is equivalent to strict convexity in the class of Banach space with dimension at most $2$. 
\end{remark}

The next corollary demonstrates that the classical $L_1(\mu)$-spaces for atomless measures $\mu$ also satisfy Steinhaus' condition, giving thus another example of a non-strictly convex space with this property. Recall that a~set $A$ in a~measure space is called an~{\it atom} if $\mu(A)>0$ and $\mu(B)\in\{0,\mu(A)\}$ for every measurable subset $B$ of $A$.
\begin{proposition}\label{C2}
Let $(\Omega,\Sigma,\mu)$ be a~measure space. Then, the space $L_1(\mu)$ satisfies {\rm (S)} if and only if $\Omega$ contains at most one atom (up to measure-zero sets).
\end{proposition}
\begin{proof}
By Luther's theorem \cite{luther}, there is a decomposition $\mu=\mu_1+\mu_2$ with $\mu_1$ being semi-finite (\emph{i.e.},~for each $A\in\Sigma$ with $\mu_1(A)=\infty$, there is a subset $B\in\Sigma$ of $A$ such that $0<\mu_1(B)<\infty$) and $\mu_2$ being degenerate (\emph{i.e.},~the range of $\mu_2$ is contained in $\{0,\infty\}$). The space $L_1(\mu)$ is then isometrically isomorphic to $L_1(\mu_1)$ (for any $f\in L_1(\mu)$ we have $\mu_2(\{x\colon f(x)\not=0\})=0$, thus the identity map yields the desired isometry). Therefore, we consider only the case where $\mu$ is semi-finite.

\vspace*{2mm}
First, suppose that $(\Omega,\Sigma,\mu)$ is atomless. Fix two functions $f,g\in L_1(\mu)$ with $f\not=g$ and $\n{f}=\n{g}=1$, and let $\delta>0$ be given. Interchanging $f$ and $g$, if necessary, we may assume that there is a set $F\in\Sigma$ such that $0<\mu(F)<\infty$ and $f(\omega)>g(\omega)$ for $\omega\in F$. Since $$F=\bigcup_{n=1}^\infty\Bigl\{\omega\in F\colon f(\omega)>g(\omega)+\frac{1}{n}\Bigr\} ,$$we may also suppose that for some $\e>0$ and all $\omega\in F$ we have $f(\omega)>g(\omega)+\e$. Approximating $f$ and $g$ by step functions we may find a measurable set $F^\prime\subset F$ with $\mu(F^\prime)>0$ and some $c_f,c_g\in\R$ such that $$\abs{f(\omega)-c_f}<\frac{\e}{5}\quad\mbox{and}\quad\abs{g(\omega)-c_g}<\frac{\e}{5}\quad(\omega\in F^\prime).$$Hence, $c_f>c_g+\frac{3}{5}\e$ and $m_f>M_g+\frac{1}{5}\e$, where $m_f=\essinf\, f(F^\prime)$ and $M_g=\esssup\, g(F^\prime)$. We have three possibilities:
\begin{itemize*}
\item[(i)] $m_f>0$ and $M_g\geq 0$,
\item[(ii)] $m_f>0$ and $M_g<0$,
\item[(iii)] $m_f\leq 0$ and $M_g<0$.
\end{itemize*}
With no loss of generality suppose that either (i) or (ii) occurs (the case (iii) is analogous to (i)). Then there is a positive number $d$ such that $\abs{m_f-d}<m_f$ and $\abs{M_g-d}>\abs{M_g}$; indeed, in the former case we shall take any $d\in (2M_g,2m_f)$, while in the latter one any sufficiently small $d$ does the job. \smallskip

Now, observe that for almost all $\omega\in F^\prime$ we have
\begin{equation}\label{fg}
\abs{f(\omega)-d}<\abs{f(\omega)}\quad\mbox{and}\quad\abs{g(\omega)-d}>\abs{g(\omega)}.
\end{equation}
Indeed, for the first inequality note that in the case where $f(\omega)\geq d>0$ it holds trivially true, while in the opposite case we have $$\abs{f(\omega)-d}=d-f(\omega)\leq d-m_f\leq\abs{d-m_f}<m_f\leq\abs{f(\omega)}.$$For the other one observe that since $M_g<d$ (recall $\abs{M_g-d}>\abs{M_g}$), we have $g(\omega)<d$, thus in the case where $g(\omega)\geq 0$ we have $$\abs{g(\omega)-d}=d-g(\omega)\geq d-M_g=\abs{d-M_g}>\abs{M_g}\geq g(\omega)=\abs{g(\omega)},$$whereas in the case where $g(\omega)<0$ this inequality is trivial. \smallskip

By the Darboux property of finite atomless measures (\cite[Th\'eor\`eme, p.~16]{sierpinski}, see also \cite[\S 215]{fremlin}), there is a measurable set $H\subset F^\prime$ with $0<\mu(H)<\delta/d$. Then $\n{d\cdot\ind_H}<\delta$, where $\ind_H$ stands for the characteristic function of $H$, while inequalities \eqref{fg} imply that $\n{f-d\cdot \ind_H}<1$ and $\n{g-d\cdot \ind_H}>1$. This proves assertion (S'), and hence also (S).\smallskip

In the case where there is exactly one atom $A\subset\Omega$ (up to measure-zero sets), either $\mu(\Omega\setminus A)=0$, which means that $L_1(\mu)\cong\R$ isometrically, or there exists an~atomless part $B\subset\Omega$ of positive measure so that $\Omega=A\cup B$. In the latter case, fix any $f,g\in L_1(\mu)$ with $f\not=g$ and $\n{f}=\n{g}=1$. First, assume that $f\vert_B=g\vert_B$ outside a set of measure zero. Both $f$ and $g$ are constant almost everywhere on $A$; denote those constant values as $c_f$ and $c_g$, respectively. Since $\n{f}=\abs{c_f}\mu(A)+\int_B\abs{f}\,{\rm d}\mu$ and $\n{g}=\abs{c_g}\mu(A)+\int_B\abs{f}\,{\rm d}\mu$, we have $\abs{c_f}=\abs{c_g}$ and $c_f\not=c_g$. So, assuming that $c_f>0$ and $c_g<0$, for any given $\delta>0$ we have $\n{f+\delta\cdot \ind_A}>1$ and $\n{g+\delta\cdot\ind_A}<1$. In the case where $f\vert_B$ and $g\vert_B$ do not coincide almost everywhere, we repeat the argument from the first part of the proof for the atomless measure space $(B,\Sigma^\prime,\mu\vert_{\Sigma^\prime})$, where $\Sigma^\prime=\{B\cap C\colon C\in\Sigma\}$. Consequently, $L_1(\mu)$ has property (S) whenever the underlying measure space contains at most one atom.

\vspace*{2mm}
Finally, suppose $\Omega$ contains two disjoint atoms, say $A_1$ and $A_2$. Consider the functions $f=\frac{1}{2}(\ind_{A_1}+\ind_{A_2})$ and $g=\frac{1}{4}\ind_{A_1}+\frac{3}{4}\ind_{A_2}$. Obviously, for $\delta\in (0,\frac{1}{4})$ there is no function $h$ with $\n{h}<\delta$ so that exactly one of $f+h$ and $g+h$ has norm larger than~$1$, thus in this case (S) fails to hold.
\end{proof}

Theorem A gives an immediate answer to the question about Steinhaus' property for $C_0(K)$-spaces, and it is unsurprisingly negative except the trivial case where the considered space is one-dimensional.
\begin{corollary}\label{C3}
Let $K$ be a locally compact Hausdorff space that contains at least two points. Then the space $C_0(K)$ consisting of scalar-valued functions on $K$ vanishing at infinity does not have property {\rm (S)}.
\end{corollary}
\begin{proof}
Pick any two distinct points $u,v\in K$, and their disjoint neighbourhoods $U$ and $V$. Since $K$ is completely regular, there is a continuous map $\p\colon K\to [0,1]$ such that $\p(u)=1$ and $\p|_{K\setminus U}=0$. Similarly, since $K\setminus U$ is also completely regular, there is a continuous map $\p_1\colon K\setminus U\to [0,1/2]$ such that $\p_1(v)=1/2$ and $\p_1|_{K\setminus (U\cup V)}=0$. Then the mapping $\psi\colon K\to [0,1]$ defined by $$\psi(x)=\left\{\begin{array}{ll}\p(x) & \mbox{for }x\in U,\\ \p_1(x) & \mbox{for }x\in K\setminus U,\end{array}\right.$$is continuous and, of course, $\p\not=\psi$. So, both functions $\p$ and $\psi$ belong to the unit sphere of $C_0(K)$, but for any $\delta\in (0,1/2)$ condition (S') is violated. 
\end{proof}


\section{Proof of Theorem B}
Here, we shall construct a Banach space with property (S) but without any strictly convex renorming. Assume that the continuum $\mathfrak{c}$ is a real-valued cardinal number. This implies that there is an atomless, $\mathfrak{c}$-complete probability measure $\mu$ defined on the power set of a set $\Omega$ with the cardinality of the continuum (see, \emph{e.g.}, \cite[\S 543B(c)]{fremlin5})---here, by a $\lambda$-complete measure $\mu$ ($\lambda$ is an uncountable cardinal) we understand a measure satisfying the following condition: for every cardinal $\kappa<\lambda$ and for every family $(A_\alpha)_{\alpha<\kappa}$ of measurable sets, their union $A$ is measurable and $$\mu(A) = \sup\Big\{\mu\big(\bigcup_{\alpha\in F} A_\alpha\big)\colon F\subset \kappa \text{ finite}\Big\}.$$ The statement that the continuum is a real-valued cardinal is equiconsistent with the existence of a two-valued measurable cardinal (\cite{solovay}), which is stronger than the consistency of {ZFC} alone. Banach and Kuratowski (\cite[p.~131]{banachkuratowski}) proved that if such a measure exists, then the Continuum Hypothesis fails to hold, hence there exists at least one uncountable cardinal below the continuum. We will show that for any set $\Gamma$ with $|\Gamma|<\mathfrak{c}$, the Bochner space $L_1(\mu, \ell_\infty(\Gamma))$ satisfies Steinhaus' condition. In particular, if $\Gamma$ is uncountable, such space does not have a strictly convex renorming as it contains $\ell_\infty(\Gamma)$ embedded via constant functions and this space does not have such a renorming by a result of Day (\cite{scday}, see also \cite[\S 4.5]{diestel}).

\begin{theorem}\label{Bochner}Assume that $\mathfrak{c}$ is a~real-valued cardinal number and let $\Gamma$ be a~set with cardinality less than $\mathfrak{c}$. Then the Bochner space $X=L_1(\mu, \ell_\infty(\Gamma))$ has property {\rm (S)} for some atomless, probability measure $\mu$. \end{theorem}

\begin{proof}As $\mathfrak{c}$ is assumed to be a real-valued cardinal number, there exists an atomless probability measure space $(\Omega, \wp(\Omega), \mu)$, where $\Omega$ is a set with the cardinality of the continuum and $\mu$ is $\mathfrak{c}$-complete. Then $\mu$ is the required measure.\smallskip

Let $f\not=g$ be two norm-one elements of $X$. Since members of $X$ are equivalence classes of the relation of equality almost everywhere, let us work with concrete representatives $f,g\colon \Omega \to \ell_\infty(\Gamma)$. Fix $\delta>0$. There exists $n_0\in \mathbb{N}$ such that $\mu(F_{n_0})>0$ where
$$
F_{n_0} = \Big\{\omega\in \Omega \colon \|f(\omega) - g(\omega)\|_{\ell_\infty(\Gamma)} > \frac{1}{n_0}\Big\}.
$$
For each $\gamma\in \Gamma$ let 
$$
G_\gamma =\Big\{\omega\in F_{n_0}\colon |f(\omega)(\gamma) - g(\omega)(\gamma)| > \frac{1}{n_0}\Big\}.
$$
Since $\mu$ is defined on the power set of $\Omega$, there is no problem with measurability of the sets $G_\gamma$ ($\gamma\in \Gamma$). Also, as $\mu$ is $\mathfrak{c}$-complete and $|\Gamma|<\mathfrak{c}$, the set $G_{\gamma_0}$ has positive measure for some $\gamma_0\in \Gamma$. Interchanging $f$ with $g$, if necessary, we may suppose that the set
$$
F=\Bigl\{\omega\in G_{\gamma_0}\colon f(\omega)(\gamma_0) > g(\omega)(\gamma_0)+\frac{1}{n_0}\Bigr\}
$$
has positive measure. Now, we proceed as in the proof of Proposition~\ref{C2}.

Approximating the functions $\omega \mapsto f(\omega)(\gamma_0)$ and $\omega \mapsto g(\omega)(\gamma_0)$ ($\omega \in \Omega$) by step functions we may find a set $F^\prime\subset F$ with $\mu(F^\prime)>0$ and some $c_f,c_g\in\R$ such that for almost all $\omega\in F^\prime$ we have $$\abs{f(\omega)(\gamma_0)-c_f}<\frac{1}{5n_0}\quad\mbox{and}\quad\abs{g(\omega)(\gamma_0)-c_g}<\frac{1}{5n_0}.$$Hence, $c_f>c_g+\frac{3}{5n_0}$ and $m_f>M_g+\frac{1}{5n_0}$, where $m_f=\essinf\,f(F^\prime)$ and $M_g=\esssup\, g(F^\prime)$. We have then three possibilities:
\begin{itemize*}
\item[(i)] $m_f>0$ and $M_g\geq 0$,
\item[(ii)] $m_f>0$ and $M_g<0$,
\item[(iii)] $m_f\leq 0$ and $M_g<0$,
\end{itemize*}
which we tackle completely analogously as in the proof of Proposition~\ref{C2} (here $f(\omega)(\gamma_0)$ and $g(\omega)(\gamma_0)$ play the r\^{o}le of $f(\omega)$ and $g(\omega)$, respectively.) Therefore (assuming either (i) or (ii) holds true), we observe that for some $d>0$ and almost all $\omega\in F^\prime$ we have
\begin{equation}\label{fg2}
\abs{f(\omega)(\gamma_0)-d}<\abs{f(\omega)(\gamma_0)}\quad\mbox{and}\quad\abs{g(\omega)(\gamma_0)-d}>\abs{g(\omega)(\gamma_0)}.
\end{equation}

Since $\mu$ is atomless, there is a~measurable set $H\subset F^\prime$ with $0<\mu(H)<\delta/d$. Then $\n{d\!\cdot\!\delta_{\gamma_0}\!\cdot\! \ind_H}_X<\delta$, where $\delta_{\gamma_0}\in \ell_\infty(\Gamma)$ stands for the element that is zero apart from the $\gamma_0^{\rm th}$ coordinate where it assumes value~$1$. Hence, \eqref{fg2} imply that $\n{f-d\!\cdot\!\delta_{\gamma_0}\!\cdot\! \ind_H}_X<1$ and $\n{g-d\!\cdot\!\delta_{\gamma_0}\!\cdot\! \ind_H}_X>1$, as desired.\end{proof}

\begin{remark}Let us note that one may replace $L_1(\mu, \ell_\infty(\Gamma))$ in the statement of Theorem~\ref{Bochner} with $L_1(\mu, Y)$ where $Y$ is any subspace of $\ell_\infty(\Gamma)$ containing $c_0(\Gamma)$.\end{remark}

\begin{corollary}\label{cor_L1}
Under the assumptions of Theorem~\ref{Bochner}, for every uncountable set $\Gamma$ with cardinality less than the continuum, the Banach space $X=L_1(\mu, \ell_\infty(\Gamma))$ has {\rm (S)}, yet it lacks  a strictly convex renorming. \end{corollary}

We have thus proved the first assertion of Theorem~B; clause (i) has been also already observed; see Remark~\ref{remark_sc}. It remains to prove clause (ii).

\begin{proof}[Proof of Theorem B (continued)]
Assume that a~Banach space $X$ with $\dim X>2$ has a~norm $\n{\!\cdot\!}$ satisfying (S); we can assume that this norm is in fact strictly convex, as otherwise we are done. In the case where $\dim X=3$, the assertion is proved by Example~\ref{Ex}, so assume that $\dim X>3$. Choose any subspace $Y\subset X$ of codimension $2$ so that we have $X=Y\oplus\R\oplus\R$ and every element $x\in X$ may be typically written as $(y,\alpha,\beta)$ with $y\in Y$, $\alpha,\beta\in\R$. (In fact, the symbols $\R$ formally stand for some fixed one-dimensional subspaces of $X$.) Note that $Y$ is a~strictly convex space of dimension at least $2$. Let $\n{\!\cdot\!}^\prime$ be a~new norm on $X$ given by the decomposition $X=(Y\oplus_{\ell_1}\R)\oplus_{\ell_2}\R$, that is
$$
\n{x}^\prime=\sqrt{(\n{y}+\abs{\alpha})^2+\abs{\beta}^2}\qquad(x=(y,\alpha,\beta)).
$$
As any two finite direct sums of the same normed spaces are isomorphic, this defines an~equivalent norm on $X$ which obviously fails to be strictly convex. Next, we shall show that it has property (S).\smallskip

For, suppose $x_1=(y_1,\alpha_1,\beta_1)$ and $x_2=(y_2,\alpha_2,\beta_2)$ are two distinct points from the unit sphere of $(X,\n{\!\cdot\!}^\prime)$. If $\beta_1\not=\beta_2$, then $(\n{(y_1,\alpha_1)},\beta_1)$ and $(\n{(y_2,\alpha_2)},\beta_2)$ are two distinct points on the unit circle, where the norm symbol stands for the $\ell_1$-norm on $Y\oplus\R$. Thus, by manipulating the coordinates $\alpha$ and $\beta$ we obtain a~vector $z$ of the form $(0,\alpha,\beta)$, and of arbitrarily small length, so that $\n{x+z}^\prime\not=\n{y+z}^\prime$.\smallskip

Now, suppose that $\beta_1=\beta_2$ and hence $\n{y_1}+\abs{\alpha_1}=\n{y_2}+\abs{\alpha_2}$. If $y_1=y_2$, then it must be $\alpha_1=-\alpha_2\not=0$, whence we easily find a~desired vector $z$ being of the  form $(0,\alpha,0)$. So, assume we have $y_1\not=y_2$. In this case, we can find $z$ with the aid of following simple observation:\medskip

\noindent
{\it Claim. }Since $Y$ is strictly convex and $\dim Y\geq 2$, for every pair of distinct vectors $y_1,y_2\in Y$ and every $\delta>0$ there exists $z\in Y$ such that $\n{z}<\delta$ and $\n{y_1+z}-\n{y_1}\not=\n{y_2+z}-\n{y_2}$.\smallskip

\noindent
Indeed, if the vectors $y_1$ and $y_2$ are linearly independent, we take $z=\eta y_1$ for suitably small $\eta>0$. Then, $\n{y_1+z}-\n{y_1}=\eta\n{y_1}$ and this is equal to $\n{y_2+z}-\n{y_2}$ if and only if $\n{y_2+z}=\n{y_2}+\n{z}$, which is impossible as the norm is strictly convex. In the case where $y_2=\gamma y_1$ for some $\gamma\in\R$, we pick any vector $z$ that is linearly independent of $y_1$ and satisfies $\n{z}<\delta$. Then, assuming with no loss of generality that $\abs{\gamma}\geq 1$, the required condition becomes $\n{\gamma y_1+z}\not=(\abs{\gamma}-1)\n{y_1}+\n{y_1+z}$ which again follows from the strict convexity of $Y$. The claim has been thus proved.

\vspace*{2mm}
Now, take a vector $z\in Y$ as in the above claim. Then we have:
\begin{equation*}
\begin{split}
\n{y_1+(z,0,0)}^\prime &\not=\n{y_2+(z,0,0)}^\prime\Longleftrightarrow\\
\n{y_1+z}+\abs{\alpha_1} &\not=\n{y_2+z}+\abs{\alpha_2}\Longleftrightarrow\\
& \qquad\mbox{(because }\n{y_1}^\prime=\n{y_2}^\prime\mbox{ and }\beta_1=\beta_2\mbox{)} \\
\n{y_1+z}-\n{y_1} &\not=\n{y_2+z}-\n{y_2},
\end{split}
\end{equation*}
which is true. Therefore, we have checked that $(X,\n{\!\cdot\!}^\prime)$ satisfies condition (S'').
\end{proof}

\begin{remark}
The last proof shows that in all dimensions $\geq 4$ there exist easy examples of non-strictly convex Banach spaces satisfying (S). However, in the `$\ell_1$-$\ell_2$-{\it sum argument}' above we heavily used the assumption that $\dim X\geq 4$, so seemingly one cannot avoid a~bit more involved geometric argument in the case $\dim X=3$ (as in Example~\ref{Ex}).
\end{remark}

\end{document}